\documentclass{article}

\usepackage[T1]{fontenc}
\usepackage{lmodern}
\usepackage{amsmath,amssymb,amsthm}
\usepackage[hidelinks]{hyperref}

\newtheorem{lemma}{Lemma}[section]
\newtheorem{theorem}[lemma]{Theorem}
\newtheorem{observation}[lemma]{Observation}

\newtheorem{problem}[lemma]{Problem}
\newtheorem{corollary}[lemma]{Corollary}
\newtheorem{conjecture}[lemma]{Conjecture}

\theoremstyle{definition}
\newtheorem{definition}[lemma]{Definition}

\newcommand{\cim}{\operatorname{cim}}

\title{Multicolor Ramsey numbers of ordered matchings}

\author{Xiangyu Li\thanks{Department of Mathematics, University of
Toronto, Canada.
\emph{Email}: \href{mailto:xiangyuu.li@mail.utoronto.ca}{\tt
xiangyuu.li@mail.utoronto.ca}. Research supported by a University of Toronto
Excellence Award (UTEA).}
}

\date{}

\begin{document}

\maketitle
\vspace{-2em}
\begin{abstract}
For an ordered graph \(H\) and an integer \(q\geq2\), let
\(r_{<}(H;q)\) denote the \(q\)-color ordered Ramsey number of \(H\).
Conlon, Fox, Lee and Sudakov asked whether,
for every \(q\geq3\), there is a constant \(c_q\) such that
\(r_{<}(M;q)\leq n^{c_q\log n}\) for every ordered matching \(M\)
on \(n\) vertices. We answer this negatively in a strong form: 
for every \(q\geq2\), there is \(c_q>0\) such that almost every
perfect matching \(M\) on \([n]\) satisfies
\[
    r_{<}(M;q)>2^{c_q(\log n)^q/(\log\log n)^{q-1}}.
\]
This matches the general upper bound up to a factor of
\((\log\log n)^{q-1}\) in the exponent.
We also give two applications. First, we prove the lower bound
conjectured by Fox, He and Wigderson for multicolor Ramsey numbers
of acyclic digraphs of bounded degree. Second, we strengthen a
result of Axenovich, Rollin and Ueckerdt by giving a
superquasipolynomial lower bound on the maximum chromatic number
of \(M\)-free ordered graphs, for almost every perfect matching 
\(M\) on \([n]\).
\end{abstract}
\section{Introduction}

An \emph{ordered graph} is a graph equipped with a linear order on its
vertices, and copies must preserve this order. For an ordered graph \(H\)
and an integer \(q\geq2\), the \(q\)-color \emph{ordered Ramsey number}
\(r_{<}(H;q)\) is the least \(N\) such that every \(q\)-edge-coloring of
the complete ordered graph on \(N\) vertices contains a monochromatic
copy of \(H\). The systematic study of these numbers was initiated
independently by Balko, Cibulka, Kr\'al and Kyn\v{c}l~\cite{BCKK} and by
Conlon, Fox, Lee and Sudakov~\cite{CFLS}; see also Balko's
survey~\cite{BalkoSurvey}. All logarithms are to base \(2\).

The best known upper bound for any ordered matching \(M\) on \(n\)
vertices is
\(r_{<}(M;q)\leq 2^{O_q((\log n)^q)}\) \cite{CFLS,BalkoGrinerova},
whereas the best previous lower bound is
\(r_{<}(M;q)\geq 2^{\Omega_q((\log n)^2/\log\log n)}\)
for almost every perfect matching \(M\) on \([n]\)~\cite{CFLS,BalkoGrinerova}.
Conlon et al. suggested that the lower bound is closer to the truth;
see also~\cite[Problem~10]{BalkoSurvey}.

\begin{problem}[Conlon et al.~{\cite[Problem~6.8]{CFLS}}]
\label{prob:cfls}
Show that for any natural number \(q\geq3\) there exists a constant
\(c_q\) such that \(r_{<}(M;q)\leq n^{c_q\log n}\) for any matching
\(M\) on \(n\) vertices.
\end{problem}

Our main result answers this negatively for every \(q\geq3\)
and matches the general upper bound up to a factor of
\((\log\log n)^{q-1}\) in the exponent.

\begin{theorem}\label{thm:main}
For every \(q\geq2\), there is \(c_q>0\) such that almost every
perfect matching \(M\) on \([n]\) satisfies 
\[
    r_{<}(M;q)>2^{c_q(\log n)^q/(\log\log n)^{q-1}}.
\]
\end{theorem}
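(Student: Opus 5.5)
We prove the bound by induction on \(q\), with a product-type coloring construction. Each new color multiplies the size of the host ordering by a factor of roughly \(2^{\log n\cdot(\log n/\log\log n)}\). The base case \(q=2\) (in fact \(q=1\) is trivial with \(N=n-1\)) is the known bound \(2^{\Omega((\log n)^2/\log\log n)}\) of Conlon et al.\ and Balko--Gregor. Our plan is to reprove that bound in a form that iterates.

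\textbf{Step 1: the local property of a random matching.} Choose a scale \(k\approx \log n/\log\log n\), or more flexibly a block length \(n^{1/\ell}\) with \(\ell\approx\log n/\log\log n\). We first show that almost every perfect matching \(M\) on \([n]\) is \emph{spread}: for every family of \(t\) disjoint intervals of \([n]\) with \(t\) small, the number of edges of \(M\) inside the union is at most a small fraction of its size. Second, every interval \(I\) of length \(m\ge n^{\varepsilon}\) is joined by edges to many distinct far-away intervals, so the restriction of \(M\) to any "coarse" structure still contains a random-like matching. This is a first-moment and union bound over \(n^{O(t)}\) interval families.

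\textbf{Step 2: the iterated coloring.} Suppose \(\chi_{q-1}\) is a \((q-1)\)-coloring of \(K_{N_{q-1}}\) with no monochromatic copy of any spread matching on \(m\) vertices, where \(m\) is a suitable fraction of \(n\). Take \(N_q=N_{q-1}\cdot T\), where \(T=2^{c\log n\cdot k}\), and arrange the vertices lexicographically as \(T\) consecutive blocks, each a copy of \([N_{q-1}]\). Color the edges inside each block with \(\chi_{q-1}\). Color the edges between blocks with the new color \(q\), but only according to a 2-coloring \(\psi\) of the block indices \([T]\). In this coloring \(\psi\), color \(q\) should avoid monochromatic copies of the relevant "quotient" matchings; edges of \(\psi\)'s other class get colors from \(\chi_{q-1}\) applied via coordinates, as in standard ordered product constructions.

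The key claim is about a monochromatic copy of \(M\). Either a large part of \(M\) lies inside one block, which by spreadness yields a spread matching on \(m\) vertices monochromatic in \(\chi_{q-1}\), a contradiction. Or the copy is spread across many blocks, and contracting blocks yields an ordered graph on \([T]\) that \(\psi\) must contain. We take \(\psi\) to be a random coloring with \(T=2^{ck\log n}\), and show by a union bound that no "contracted \(M\)" (of which there are few, by spreadness, \(\le 2^{O(k\log n)}\)) appears monochromatically.

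\textbf{Step 3: accounting.} Iterating gives \(\log N_q\approx \log N_{q-1}+c(\log n)^2/\log\log n\) if only additive, which is too weak. To get the product exponent, the block count must instead grow so that \(\log T_q\approx (\log N_{q-1})\cdot\log n/\log\log n\). This means the random coloring on blocks must be replaced by a coloring on \([N_{q-1}]^{k}\)-type tuples, in the spirit of the Erdős–Hajnal stepping up lemma adapted to ordered matchings. Then one lost factor of \(\log\log n\) per step gives \((\log n)^q/(\log\log n)^{q-1}\).

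\textbf{Main obstacle.} The hardest step is this multiplicative stepping up. We must design the coloring of \(K_{N_q}\) via \(k\)-tuples of a \((q-1)\)-colored ground set and prove that a monochromatic random matching forces a monochromatic spread matching on \(\approx n^{1-o(1)}\) vertices at the previous level. We would first try a lexicographic \(\delta\)-function coloring: color by the first differing coordinate and by the previous color there. We would then use Step 1's spreadness to argue that \(M\) cannot be compressed into few coordinates.
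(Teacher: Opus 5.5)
\textbf{There is a genuine gap.} As written, this is not a proof. The step that produces the exponent \((\log n)^q/(\log\log n)^{q-1}\), your ``multiplicative stepping up'', is left open as the main obstacle. Step~3 itself concedes that Steps~1--2 give at most additive growth of \(\log N\).

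Step~2 also fails even for that additive bound. A random \(2\)-coloring \(\psi\) of the block indices cannot avoid contracted copies of \(M\). Take a copy of \(M\) whose vertices lie in distinct blocks; its contraction is again an ordered matching on \(n\) vertices. Now split \([T]\) into \(m\) consecutive intervals of length \(2\sqrt{\log m}\). With high probability every pair of intervals is joined by an edge of each color, since a given pair fails with probability \(m^{-4}\). So once \(T\geq 2m\sqrt{\log m}\), each color class of \(\psi\) has a \(K_m\) interval minor and therefore contains every ordered matching on \(m\) vertices. With \(T=2^{ck\log n}\gg n\), color \(q\) contains \(M\).

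Your union bound counts the few types of contracted graphs but not their \(\binom{T}{m}\) placements. Each placement is monochromatic with probability \(2^{-m/2}\), and \(\binom{T}{m}2^{-m/2}\gg1\). Your dichotomy also does not cover the mixed case, where a monochromatic copy has edges both inside and between blocks with neither part dominant.

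\textbf{The missing ingredients.} Replace ``spread'' by the complete interval minor number. Almost every perfect matching has \(\cim(M)>n^{1/4}\) (Lemma~2.2 of Conlon, Fox, Lee and Sudakov), so it suffices that every color class have \(\cim\leq n^{1/4}\).

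The ``first differing coordinate'' construction you mention at the end is exactly the iterated substitution \(G^{[d]}\). What you lack is its analysis, \(\cim(G[H])\leq2\cim(G)+\omega(G)\cim(H)\). Internal intervals sit in blocks indexed by a clique of \(G\), and alternate crossing intervals give two interval minors of \(G\). Hence \(\cim(G^{[d]})\leq(\omega(G)+2)^{d-1}\cim(G)\). Raising \(N\) to \(N^d\) then costs only a factor \((\omega+2)^{d-1}\), provided every color class has small clique number.

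Randomness is used to get that small clique number, not to avoid \(M\). Work with \(k\)-covers, where each edge carries \(k\) of the \(q\) colors; lexicographic powers preserve this. Deleting one random color per edge gives a \((k-1)\)-cover whose classes all have \(\omega<4q\log N\).

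Start from the trivial \(q\)-cover of \(K_{d^d}\) and alternate thinning with \(d\)-th powers \(q-1\) times, with \(d\approx\log n/(4q^2\log\log n)\). This yields a \(q\)-coloring of \(K_{d^{d^q}}\) in which every class has \(\cim\leq d^{q^2d}\leq n^{1/4}\).

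So the induction runs on the number of colors per edge, with all \(q\) colors present at every level. Your set-up, which adds a new color \(q\) on top of a \((q-1)\)-coloring, has no mechanism for this: a lexicographic power of a \((q-1)\)-coloring is still a \((q-1)\)-coloring.
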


Theorem~\ref{thm:main} also applies to \emph{oriented Ramsey numbers}.
For an acyclic digraph \(H\), let \(\overrightarrow r_q(H)\) be the
least \(N\) such that every \(q\)-edge-colored tournament on \(N\)
vertices contains a monochromatic copy of \(H\).
For integers \(q,\Delta\geq1\) and \(n>\Delta\), let \(H_{q,n,\Delta}\)
be an \(n\)-vertex acyclic digraph of maximum total degree at most
\(\Delta\) maximizing \(\overrightarrow r_q(H)\).
Fox, He and Wigderson~\cite[Theorems~1.6 and~1.7]{FHW} proved that,
for every fixed \(q\geq2\) and \(\Delta\geq3\),
\[
    \Omega\!\left(\frac{(\log n)^2}{\log\log n}\right)
    \leq \log \overrightarrow r_q(H_{q,n,\Delta})
    \leq O_{q,\Delta}\!\left((\log n)^{2^{2q-1}}\right).
\]
Equivalently, there exists an \(n\)-vertex acyclic digraph \(H\) of
maximum total degree at most \(\Delta\) satisfying the lower bound,
while every such \(H\) satisfies the upper bound.
In particular, the power of \(\log n\) is independent of \(q\) in the lower bound
but grows exponentially with \(q\) in the upper bound.
They conjectured that the lower-bound exponent can tend to infinity
with \(q\), while the upper-bound exponent grows subexponentially.

\begin{conjecture}[Fox, He and Wigderson~{\cite[Conjecture~6.3]{FHW}}]
\label{conj:fhw}
There exist \(\Delta\geq3\) and constants \(c_q=\omega_q(1)\) and
\(C_q=2^{o(q)}\) such that
\[
    \Omega_q(\log^{c_q} n)
    \leq \log \overrightarrow r_q(H_{q,n,\Delta})
    \leq O_q(\log^{C_q} n).
\]
\end{conjecture}

We prove the lower-bound part of Conjecture~\ref{conj:fhw}
with \(\Delta=3\) and exponent \(q-o(1)\).

\begin{corollary}\label{cor:oriented}
For every fixed \(q\geq2\), as \(n\to\infty\),
\[
    \log \overrightarrow r_q(H_{q,n,3})
    =\Omega_q\!\left(
        \frac{(\log n)^q}{(\log\log n)^{q-1}}
    \right).
\]
\end{corollary}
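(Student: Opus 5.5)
We derive Corollary~\ref{cor:oriented} from Theorem~\ref{thm:main} by a reduction from ordered matchings to acyclic digraphs of maximum total degree $3$. The natural device is this: for an ordered graph $M$ on $[n]$, let $D(M)$ be the digraph on $[n]$ containing the Hamiltonian directed path $1\to2\to\dots\to n$ together with every edge $\{i,j\}$ of $M$ with $i<j$, oriented $i\to j$. Then $D(M)$ is acyclic, since all edges go forward. If $M$ is a perfect matching, every vertex has total degree at most $2+1=3$. Hence $\overrightarrow r_q(H_{q,n,3})\ge \overrightarrow r_q(D(M))$ for every perfect matching $M$ on $[n]$ (for even $n$; odd $n$ is handled by monotonicity, adding an isolated-ish endpoint to the path). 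It therefore suffices to show $\overrightarrow r_q(D(M))\ge r_<(M;q)$, or at least $\ge r_<(M;q')$ for a comparable bound, and then apply Theorem~\ref{thm:main}.

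The key step is turning a $q$-coloring of the complete ordered graph $K_N$ with no monochromatic $M$ into a $q$-colored tournament on $N$ vertices with no monochromatic $D(M)$. Take the transitive tournament on $[N]$, orienting $i\to j$ when $i<j$, and keep the same coloring. Any copy of $D(M)$ in a transitive tournament respects the order, since the Hamiltonian path forces the embedding $\phi$ to be increasing. The monochromatic color class would then contain the matching edges $\phi(i)\phi(j)$ in increasing order, which is an ordered copy of $M$.

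The subtlety is that a monochromatic copy of $D(M)$ also needs the path edges $\phi(k)\phi(k+1)$ in the same color. This makes $D(M)$ harder to find than $M$ alone, not easier, so the implication runs in the direction we want: no monochromatic ordered $M$ implies no monochromatic $D(M)$. Thus $\overrightarrow r_q(D(M))\ge r_<(M;q)$ outright, with no loss in $q$.

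Combining the pieces, Theorem~\ref{thm:main} supplies, for large even $n$, a perfect matching $M$ with $\log r_<(M;q)>c_q(\log n)^q/(\log\log n)^{q-1}$. This yields the claimed $\Omega_q$ bound for $\log\overrightarrow r_q(H_{q,n,3})$, and for odd $n$ we use $n-1$ and absorb the change into the constant. The main point to double-check is the degree count. Endpoints of the path have path-degree $1$, and every interior vertex has path-degree $2$ plus matching-degree $1$, so the maximum total degree is exactly $3$. With that verified, no real obstacle remains. The only place care is needed is confirming that embeddings into a transitive tournament are order-preserving; the directed Hamiltonian path guarantees this.
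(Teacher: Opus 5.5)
Your proposal is correct and is essentially the paper's own proof: adjoin the path \(1\to2\to\cdots\to n\) to \(M\), orient every edge forwards, and use that path to force any copy inside the forward-oriented \(M\)-free coloring to be order-preserving, so a monochromatic copy of \(D(M)\) would contain a monochromatic ordered copy of \(M\). Your odd-\(n\) reduction via subgraph monotonicity also matches the paper's footnote; the only nitpick is that the maximum total degree is at most \(3\) rather than exactly \(3\), since a matching edge \(\{i,i+1\}\) coincides with a path edge, and this is harmless.
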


\begin{proof}
We may assume that \(n\) is even.\footnote{For odd \(n\), add an
isolated vertex to the construction on \(n-1\) vertices and decrease
the constant if necessary.}
By Theorem~\ref{thm:main}, choose an ordered matching \(M\) on \([n]\)
such that
\[
    r_{<}(M;q)>
    \left\lfloor 2^{c_q(\log n)^q/(\log\log n)^{q-1}}\right\rfloor
    =:N
\]
for some \(c_q>0\).
Define \(H\) by adding the edges \(i(i+1)\), \(i\in[n-1]\), to \(M\)
and orienting every edge forwards. Then \(H\) is acyclic and has
maximum total degree at most \(3\).

Take a \(q\)-coloring of the complete ordered graph on \([N]\)
with no monochromatic ordered copy of \(M\).
Orient every edge forwards to obtain a \(q\)-colored tournament.
Since \(H\) contains the directed path \(1\to2\to\cdots\to n\),
every embedding of \(H\) into this tournament preserves the vertex
order. A monochromatic copy of \(H\) would therefore give a
monochromatic ordered copy of \(M\), a contradiction.
Thus \(\overrightarrow r_q(H_{q,n,3})\geq
\overrightarrow r_q(H)>N\), proving the bound.
\end{proof}

Finally, for an ordered graph \(H\), define
\[
    f_{<}(H)=\sup\{\chi(G):G\text{ is an }H\text{-free ordered graph}\}.
\]
Bourneuf, Cocquet, Tang and Thomass\'e~\cite{BCTT} showed that
\(f_{<}(M)<\infty\) for every ordered matching \(M\).
On the other hand, Axenovich, Rollin and
Ueckerdt~\cite[Theorem~4]{ARU} proved that, for all large even \(n\),
some \(n\)-vertex ordered matching \(M\) satisfies
\(f_{<}(M)\geq2^{\Omega((\log n)^2/\log\log n)}\).
A simple adaptation of the proof of Theorem~\ref{thm:main} gives
the following bound, ruling out any quasipolynomial upper bound
in \(n\).

\begin{theorem}\label{thm:chromatic}
There exists \(c>0\) such that almost every perfect matching \(M\)
on \([n]\) satisfies
\[
    f_{<}(M)\geq 2^{2^{c\sqrt{\log n}}}.
\]
\end{theorem}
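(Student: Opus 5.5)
The plan is to derive Theorem~\ref{thm:chromatic} from a quantitative version of Theorem~\ref{thm:main} in which the number of colors \(q\) grows with \(n\), combined with the standard product bound for chromatic numbers. The product bound is this. If the complete ordered graph on \([N]\) is \(q\)-colored with color classes \(G_1,\dots,G_q\), then colouring each vertex by the tuple of its colours in proper colourings of the \(G_i\) gives a proper colouring of \(K_N\). Hence
\[
N=\chi(K_N)\le\prod_{i=1}^q\chi(G_i),
\]
so some \(G_i\) has \(\chi(G_i)\ge N^{1/q}\). If the colouring has no monochromatic ordered copy of \(M\), that \(G_i\) is an \(M\)-free ordered graph, and therefore
\[
f_{<}(M)\ge N^{1/q}=2^{(\log N)/q}.
\]
It thus suffices to find, for almost every \(M\), a colouring on \(N\) vertices with \((\log N)/q\ge 2^{c\sqrt{\log n}}\). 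Unlike Theorem~\ref{thm:main}, we may choose \(q=q(n)\to\infty\) freely.

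The first step is to go back through the proof of Theorem~\ref{thm:main} and make every dependence on \(q\) explicit. Presumably that proof builds the \(q\)-colouring by an iterated or product construction, one level per colour. Each level multiplies the exponent by a factor of order \(\log n/\log\log n\), and the random matching is shown to avoid every colour class with high probability via a union bound over the possible embeddings. I would aim to extract a statement of the following form: there are absolute constants \(C,\varepsilon>0\) such that for all \(2\le q\le\varepsilon\sqrt{\log n}\), all but an \(o(1)\) fraction of perfect matchings \(M\) on \([n]\) satisfy
\[
\log r_{<}(M;q)\ \ge\ \Bigl(\frac{\log n}{C\log\log n}\Bigr)^{q-1}.
\]
Here the restriction \(q\lesssim\sqrt{\log n}\) is where I expect the losses to enter. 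Each of the \(q\) levels uses a scale parameter that must remain \(\gtrsim q\) or a block size \(\gtrsim (\log n)/q\), and the product of these requirements forces \(q^2\lesssim\log n\). The error probabilities, summed over \(q\) levels, must also stay \(o(1)\).

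With this in hand, set \(q=\lfloor\varepsilon\sqrt{\log n}\rfloor\). Then
\[
\frac{\log N}{q}\ \ge\ 2^{(q-1)\log\bigl(\log n/(C\log\log n)\bigr)-\log q}\ \ge\ 2^{c\sqrt{\log n}}
\]
for a suitable \(c>0\) and large \(n\), with room to spare, since the exponent is in fact of order \(\sqrt{\log n}\,\log\log n\). Applying the product bound then finishes the proof. Because \(q\) is a single fixed function of \(n\), the ``almost every'' statement needs only one application of the quantitative bound and no union over \(q\).

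The main obstacle is the first step: verifying that the argument for Theorem~\ref{thm:main} survives with \(q\) as large as \(\varepsilon\sqrt{\log n}\). In particular, the constant \(c_q\) must decay at most like \(C^{-q}\) rather than, say, like \(1/q!\) or a tower. The failure probability for a random matching must also remain \(o(1)\) uniformly. I would first check the probabilistic union bound at the deepest level, since that is where the number of candidate embeddings is largest. If the constants turn out worse than \(C^{-q}\), the slack in the final display allows that, provided \(\log(1/c_q)=O(q\log\log n)\).
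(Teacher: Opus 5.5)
Your reduction through \(N=\chi(K_N)\le\prod_i\chi(G_i)\) is exactly the paper's, and so is your target: a \(q\)-colouring of \(K_N\) with no monochromatic \(M\) and \((\log N)/q\ge 2^{c\sqrt{\log n}}\). The gap is that you never construct this colouring. You defer it to a uniform-in-\(q\) form of Theorem~\ref{thm:main}, namely \(\log r_{<}(M;q)\ge(\log n/(C\log\log n))^{q-1}\) for all \(q\le\varepsilon\sqrt{\log n}\). That statement does not come out of the paper's argument. It would give an inner exponent of order \(\sqrt{\log n}\,\log\log n\), which is a stronger theorem than the one you are proving.

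In the paper's argument, each of the \(q-1\) applications of Lemma~\ref{lem:amplification} with exponent \(d\) raises \(N\) to the power \(d\), but multiplies \(\cim\) of each colour class by \((8q\log N)^d\), where \(\log(8q\log N)\approx q\log d\). Keeping \(\cim(G_i)\le n^{1/4}<\cim(M)\) therefore needs roughly \(q^2 d\log d\lesssim\log n\). At \(q=\varepsilon\sqrt{\log n}\) this forces \(d=O(1)\); indeed the paper's choice \(d=\lfloor\log n/(4q^2\log\log n)\rfloor\) becomes \(0\). So each level gains only a constant factor in \(\log N\), not \(\log n/(C\log\log n)\). Your fallback criterion is also insufficient. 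The constant the proof of Theorem~\ref{thm:main} actually delivers is roughly \(c_q\approx(4q^2)^{-q}\). This satisfies \(\log(1/c_q)=O(q\log\log n)\), yet \(c_q(\log n)^q/(\log\log n)^{q-1}<1\) at \(q=\varepsilon\sqrt{\log n}\).

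The missing idea is to give up the \(\log n/\log\log n\) gain per level and use \(d=2\) throughout, starting from the \(q\)-cover of \(K_2\). After \(q-1\) applications of Lemma~\ref{lem:amplification} you get a \(q\)-colouring of \(K_N\) with \(N=2^{2^{q-1}}\). Now \(\log(8q\log N)=O(q)\), so every colour class satisfies \(\cim(G_i)\le 2(8q\log N)^{2(q-1)}\le 2^{4q^2}\le n^{1/4}\) when \(q=\lfloor\sqrt{\log n}/4\rfloor\). Then \((\log N)/q=2^{q-1}/q\) finishes the proof.

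Your concern about failure probabilities accumulating over levels does not arise either. The matching \(M\) enters only through the single event \(\cim(M)>n^{1/4}\), which holds for almost every perfect matching by \cite[Lemma~2.2]{CFLS}, independently of \(q\) and of the colouring. Given this event, every colour class is \(M\)-free deterministically, because \(\cim\) cannot increase when passing to an ordered subgraph. No union bound over embeddings is involved.
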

\paragraph{Acknowledgment.}
I am very grateful to Lior Gishboliner for his encouragement, careful reading of the proof, and generous help in
improving the exposition of this paper.

\paragraph{Declaration on AI use.}
Theorem~\ref{thm:main} was proved without AI. ChatGPT suggested
its application to the lower-bound part of Conjecture~\ref{conj:fhw}.
AI also helped polish the writing.
\section{Proofs of Theorems~\ref{thm:main} and~\ref{thm:chromatic}}

\begin{definition}
A \(K_t\) \emph{interval minor} in an ordered graph \(G\) is a collection
of nonempty intervals \(I_1<\cdots<I_t\) such that there is an edge
between \(I_i\) and \(I_j\) for every \(i<j\). Let \(\cim(G)\)
denote the largest \(t\) for which \(G\) contains \(K_t\) as an interval
minor, with \(\cim(\varnothing)=0\).
\end{definition}

\begin{definition}
For ordered graphs \(G\) and \(H\), the \emph{ordered substitution of
\(H\) into \(G\)}, denoted by \(G[H]\), is obtained by replacing each
vertex \(u\in V(G)\) with a consecutive ordered copy \(B_u\) of \(H\).
The blocks \(B_u\) occur in the order of their corresponding vertices,
and \(B_u\) is complete to \(B_v\) if \(uv\in E(G)\), and anticomplete
otherwise.

For \(d\geq1\), define the \emph{\(d\)-fold iterated substitution
\(G^{[d]}\)} recursively by \(G^{[1]}=G\) and
\(G^{[d]}=G[G^{[d-1]}]\) for \(d\geq2\). Its vertices can be identified
with \(V(G)^d\), ordered lexicographically, and two distinct tuples are
adjacent if and only if their entries in the first coordinate in which
they differ are adjacent in \(G\).
\end{definition}

\begin{lemma}\label{lem:substitution}
For all ordered graphs \(G\) and \(H\),
\[
    \cim(G[H])\leq2\cim(G)+\omega(G)\cim(H).
\]
In particular, for every integer \(d\geq1\),
\(\cim(G^{[d]})\leq(\omega(G)+2)^{d-1}\cim(G)\).
\end{lemma}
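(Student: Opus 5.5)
Fix a \(K_t\) interval minor \(I_1<\cdots<I_t\) in \(G[H]\). I will split the intervals into two types and bound each type separately. Call \(I_i\) \emph{internal} if it lies inside a single block \(B_u\), and \emph{spanning} otherwise. For each \(I_i\), let \(J_i\subseteq V(G)\) be the set of vertices \(u\) with \(B_u\cap I_i\neq\varnothing\). Since blocks are consecutive and ordered like \(V(G)\), \(J_i\) is an interval of \(V(G)\). Because the \(I_i\) are disjoint and ordered, \(\max J_i\leq\min J_{i'}\) whenever \(i<i'\). So two such projections share at most one endpoint block.

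\emph{Spanning intervals.} List the spanning intervals in order as \(I_{a_1}<I_{a_2}<\cdots<I_{a_s}\), and keep every other one: \(I_{a_1},I_{a_3},\dots\). Each \(J_{a_j}\) has at least two vertices. Hence
\[
\max J_{a_1}\leq\min J_{a_2}<\max J_{a_2}\leq\min J_{a_3},
\]
so the kept projections are pairwise disjoint intervals of \(G\), in increasing order. Now take two kept intervals \(I_i<I_{i'}\). They are joined by an edge \(xy\) of \(G[H]\) with \(x\in B_u\), \(y\in B_v\), \(u\in J_i\), \(v\in J_{i'}\). Disjointness gives \(u\neq v\), so \(xy\) goes between distinct blocks and therefore \(uv\in E(G)\). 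Thus the kept projections form a \(K_{\lceil s/2\rceil}\) interval minor in \(G\). Hence \(\lceil s/2\rceil\leq\cim(G)\), i.e.\ there are at most \(2\cim(G)\) spanning intervals.

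\emph{Internal intervals.} Group the internal intervals by their host block, and let \(U\subseteq V(G)\) be the set of hosts. For distinct \(u,v\in U\), an internal interval in \(B_u\) and one in \(B_v\) must be joined by an edge. That edge runs between the blocks, which forces \(uv\in E(G)\). So \(U\) is a clique and \(|U|\leq\omega(G)\). Within one block \(B_u\cong H\), the internal intervals there are intervals of \(B_u\). The edges between them are edges of this copy of \(H\), so they form an interval minor of \(H\), and there are at most \(\cim(H)\) of them. Summing the two types gives \(t\leq2\cim(G)+\omega(G)\cim(H)\).

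\emph{The iterated bound.} This follows by induction on \(d\), with \(d=1\) trivial. Set \(c=\cim(G)\) and \(\omega=\omega(G)\). Then
\[
\cim(G^{[d]})\leq2c+\omega(\omega+2)^{d-2}c\leq(\omega+2)^{d-1}c,
\]
using \(2\leq 2(\omega+2)^{d-2}\).

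The only delicate step is the spanning case. There, consecutive projections may share a boundary block, so disjointness fails. The alternation trick is exactly what costs the factor \(2\).
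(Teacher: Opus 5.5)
Your proof is correct and takes essentially the same approach as the paper. You use the same internal/crossing split, the same clique argument for the blocks hosting internal intervals, and the same alternation trick on the projections $J_i$ of the crossing intervals. The only difference is that the paper bounds both the odd- and even-indexed subfamilies, while you keep only the odd-indexed one and use $s\le 2\lceil s/2\rceil\le 2\cim(G)$, which gives the same bound.
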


\begin{proof}
Let \(H_u\) be the copy of \(H\) replacing \(u\in V(G)\), and fix a
\(K_t\)-interval minor \(I_1<\cdots<I_t\) in \(G[H]\). Say that \(I_i\)
is \emph{internal} if \(I_i\subseteq V(H_u)\) for some \(u\in V(G)\),
and \emph{crossing} otherwise. The vertices \(u\) for which \(H_u\)
contains an internal interval form a clique in \(G\), and each
\(H_u\) contains at most \(\cim(H)\) internal intervals. Thus, at most
\(\omega(G)\cim(H)\) of the intervals \(I_i\) are internal.

Write the crossing intervals as \(I_{n_1}<\cdots<I_{n_p}\), and let
\(J_i=\{u\in V(G):V(H_u)\cap I_{n_i}\neq\varnothing\}\).
These are intervals of \(V(G)\), each containing at least two vertices,
and satisfy
\[
    \max J_i\leq\min J_{i+1}<\max J_{i+1}\leq\min J_{i+2}
\]
for \(1\leq i\leq p-2\). Therefore, the odd- and even-indexed \(J_i\)
form complete interval minors in \(G\).
It follows that \(p\leq2\cim(G)\) and hence
\(t\leq2\cim(G)+\omega(G)\cim(H)\).
The bound for \(G^{[d]}\) follows by induction on \(d\).
\end{proof}

\begin{definition}
For integers \(q\geq k\geq1\), a \emph{\(k\)-cover} of \(K_N\) is a
sequence \(G_1,\ldots,G_q\) of spanning subgraphs of \(K_N\)
such that every edge belongs to exactly \(k\) of them.
\end{definition}

\noindent
In other words, a \(k\)-cover of \(K_N\) is a coloring of \(E(K_N)\) where
every edge receives a set of \(k\) colors out of a total of \(q\) colors.

\begin{observation}\label{obs:cover-substitution}
If \(G_1,\ldots,G_q\) is a \(k\)-cover of \(K_N\), then
\(G_1^{[d]},\ldots,G_q^{[d]}\) is a \(k\)-cover of \(K_{N^d}\).
\end{observation}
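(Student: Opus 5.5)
The statement concerns the iterated substitutions \(G_1^{[d]},\ldots,G_q^{[d]}\), all on the common vertex set \(V(K_N)^d=[N]^d\) ordered lexicographically. By the description in the definition, two distinct tuples \(x,y\) are adjacent in \(G_c^{[d]}\) exactly when, at the first coordinate \(j\) where they differ, the entries \(x_j\neq y_j\) are adjacent in \(G_c\). The plan is to check the \(k\)-cover condition one pair at a time.

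Fix distinct \(x,y\in[N]^d\), and let \(j\) be the first index with \(x_j\neq y_j\). This index is the same for every color \(c\), since it depends only on \(x\) and \(y\). Therefore
\[
\{c : xy\in E(G_c^{[d]})\}=\{c : x_jy_j\in E(G_c)\}.
\]
Because \(G_1,\ldots,G_q\) is a \(k\)-cover of \(K_N\), the right-hand side has exactly \(k\) elements. Hence every pair of distinct vertices of \([N]^d\) lies in exactly \(k\) of the graphs \(G_c^{[d]}\).

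Each \(G_c^{[d]}\) has vertex set \([N]^d\), of size \(N^d\), so each is a spanning subgraph of \(K_{N^d}\). Together with the previous paragraph, this shows that \(G_1^{[d]},\ldots,G_q^{[d]}\) is a \(k\)-cover of \(K_{N^d}\).

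The only step needing care is to use the tuple description of \(G^{[d]}\) rather than the recursive definition. Alternatively, one can argue by induction on \(d\) using the recursion. A pair lying inside a single block \(B_u\) is covered exactly \(k\) times by the induction hypothesis. A pair split across two blocks \(B_u,B_v\) inherits its colors from the edge \(uv\) of \(K_N\), which is covered exactly \(k\) times. Either route is routine, and there is no real obstacle.
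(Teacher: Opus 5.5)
Your proof is correct and is the argument the paper leaves implicit. The paper states this as an observation without proof, relying on the tuple description of \(G^{[d]}\). Since the first coordinate \(j\) where \(x\) and \(y\) differ does not depend on the color, the colors of \(xy\) in the substituted graphs are exactly the \(k\) colors of \(x_jy_j\).
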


\begin{lemma}\label{lem:thinning}
Let \(q\geq k\geq2\) and \(N\geq2\). Every \(k\)-cover
\(G_1,\ldots,G_q\) of \(K_N\) has spanning subgraphs
\(G_i'\subseteq G_i\) forming a \((k-1)\)-cover such that
\(\omega(G_i')<\lceil4q\log N\rceil\) for every \(i\in[q]\).
\end{lemma}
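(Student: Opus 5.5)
The plan is a random deletion. For every edge \(e\) of \(K_N\), let \(S_e\subseteq[q]\) be the set of \(k\) indices with \(e\in G_i\). Choose \(\sigma(e)\in S_e\) independently and uniformly at random, and delete \(e\) from \(G_{\sigma(e)}\). Set \(G_i'=G_i\setminus\{e:\sigma(e)=i\}\). Each edge then lies in exactly \(k-1\) of the graphs \(G_i'\), so these form a \((k-1)\)-cover with \(G_i'\subseteq G_i\). It remains to show that, with positive probability, no \(G_i'\) contains a clique of size \(s:=\lceil4q\log N\rceil\).

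Fix \(i\in[q]\) and a set \(S\) of \(s\) vertices forming a clique in \(G_i\). For \(S\) to be a clique in \(G_i'\), every one of its \(\binom s2\) edges must avoid \(\sigma(e)=i\). These events are independent, and each has probability \(1-1/k\leq 1-1/q\). Hence
\[
\Pr[S\text{ is a clique in }G_i']\leq(1-1/q)^{\binom s2}\leq e^{-s(s-1)/(2q)}.
\]
A union bound over the at most \(q\binom Ns\leq qN^s\) pairs \((i,S)\) gives a total failure probability of at most
\[
q\exp\bigl(s\ln N-s(s-1)/(2q)\bigr).
\]
With \(s\geq4q\log N\) we have \((s-1)/(2q)\) roughly \(2\log N\geq 2\ln N\). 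So the exponent is about \(-s\ln N\), and the bound is \(qN^{-s}\), which is far less than \(1\).

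The only delicate step is this arithmetic for small \(N\), where the \(-1\) in \(s-1\) and the prefactor \(q\) matter. Since \(s-1\geq 4q\log N-1\) and \(N\geq2\), we get
\[
\frac{s-1}{2q}\geq 2\log N-\frac1{2q}\geq \ln N+\Bigl(2\log N-\ln N-\tfrac14\Bigr),
\]
using \(q\geq2\). Here \(2\log N-\ln N\geq(2-0.7)\log N\geq1.3\). The exponent is therefore at most \(-s\cdot1.05\). With \(s\geq 8\) this gives \(q e^{-1.05s}<1\), since \(s\geq4q\) forces \(e^{1.05s}\geq e^{4q}>q\).

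Consequently some choice of \(\sigma\) works. That choice yields spanning subgraphs \(G_i'\subseteq G_i\) forming a \((k-1)\)-cover with \(\omega(G_i')<s\) for every \(i\in[q]\), as the lemma requires.
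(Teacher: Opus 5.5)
Your proof is correct and takes essentially the same approach as the paper: for each edge, choose uniformly at random which of its \(k\) graphs loses it, bound the probability that a fixed \(s\)-set is a clique in \(G_i'\) by \((1-1/q)^{\binom s2}\), and take a union bound over the pairs \((i,S)\). The only difference is the final arithmetic: you work with natural exponentials and get \(qe^{-1.05s}<1\), while the paper uses base \(2\) and \(r-1\geq 3q\log N\) to get \(qN^{-r/2}\leq q2^{-r/2}<1\).
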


\begin{proof}
Let \(r=\lceil4q\log N\rceil\). Independently for each edge of \(K_N\),
delete it from one of the \(k\) graphs containing it, chosen uniformly at
random. This gives a \((k-1)\)-cover. For each \(i\in[q]\), a fixed
\(r\)-set is a clique in \(G_i'\) with probability at most
\((1-1/q)^{\binom r2}\leq 2^{-\binom r2/q}\). Therefore, by the union
bound,
\[
    \Pr\bigl(\omega(G_i')\geq r\text{ for some }i\in[q]\bigr)
    \leq q\binom Nr 2^{-\binom r2/q}
    \leq qN^{-r/2}
    \leq q2^{-r/2}
    <q2^{-q}<1.
\]
Here we used \(r-1\geq3q\log N\) and \(r>2q\). Thus some
choice gives \(\omega(G_i')<r\) for every \(i\in[q]\).
\end{proof}
Combining Lemma~\ref{lem:substitution},
Observation~\ref{obs:cover-substitution} and Lemma~\ref{lem:thinning},
we have the following.
\begin{lemma}\label{lem:amplification}
Let \(q\geq k\geq2\), \(N\geq2\), and \(d\geq1\) be integers.
Every \(k\)-cover \(G_1,\ldots,G_q\) of \(K_N\) gives a
\((k-1)\)-cover \(H_1,\ldots,H_q\) of \(K_{N^d}\) such that
for every \(i \in [q]\),
\[
    \cim(H_i)\leq(8q\log N)^d\cim(G_i).
\]
\end{lemma}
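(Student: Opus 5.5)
The natural order of operations is substitution first, then thinning, so that the clique-number factor in Lemma~\ref{lem:substitution} is controlled by \(\omega(G_i)\) and not by the size of the blown-up graph. Concretely, I would first use Observation~\ref{obs:cover-substitution} to form the \(k\)-cover \(G_1^{[d]},\ldots,G_q^{[d]}\) of \(K_{N^d}\). I would then apply Lemma~\ref{lem:thinning} to it. The trouble is that this lemma gives clique bound \(\lceil 4q\log N^d\rceil\), which grows with \(d\). That loses a factor \(d\) per level and does not combine well with the substitution bound.

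So instead I would thin first and then substitute:

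\emph{Step 1: thin at the base.} Apply Lemma~\ref{lem:thinning} to \(G_1,\ldots,G_q\) on \(K_N\). This gives a \((k-1)\)-cover \(G_1',\ldots,G_q'\) of \(K_N\) with \(G_i'\subseteq G_i\) and \(\omega(G_i')<\lceil 4q\log N\rceil\), so \(\omega(G_i')\le 4q\log N\).

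\emph{Step 2: substitute.} Set \(H_i=(G_i')^{[d]}\). By Observation~\ref{obs:cover-substitution}, applied with \(k-1\) in place of \(k\), the graphs \(H_1,\ldots,H_q\) form a \((k-1)\)-cover of \(K_{N^d}\). Observation~\ref{obs:cover-substitution} is stated for \(k\)-covers, but it holds for every \(k\ge1\): two distinct tuples are adjacent in \(H_i\) exactly when their first differing coordinates are adjacent in \(G_i'\), and that pair of coordinates lies in exactly \(k-1\) of the \(G_i'\).

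\emph{Step 3: bound the interval minors.} By the second part of Lemma~\ref{lem:substitution},
\[
\cim(H_i)\le(\omega(G_i')+2)^{d-1}\cim(G_i')\le(4q\log N+2)^{d-1}\cim(G_i).
\]
The last step uses monotonicity: \(\cim(G_i')\le\cim(G_i)\), since any interval minor of a subgraph is also an interval minor of the whole graph.

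\emph{Step 4: arithmetic.} Since \(N\ge2\) and \(q\ge2\), we have \(4q\log N\ge 8\ge 2\), so \(4q\log N+2\le 8q\log N\). Moreover \(8q\log N\ge1\), so \((8q\log N)^{d-1}\le(8q\log N)^d\). This gives the claimed bound \(\cim(H_i)\le(8q\log N)^d\cim(G_i)\).

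The only real obstacle is choosing the order of the two operations. Thinning must be done on the base graph, so that the clique bound is \(O(q\log N)\) and does not depend on \(d\). Everything else is a direct combination of the earlier results.
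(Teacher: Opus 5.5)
Your proof is correct and matches the paper's argument: thin the base cover with Lemma~\ref{lem:thinning}, set \(H_i=(G_i')^{[d]}\), and combine the second part of Lemma~\ref{lem:substitution} with \(\omega(G_i')+2\le 8q\log N\) and \(\cim(G_i')\le\cim(G_i)\). Your Step~4 just writes out the small arithmetic details the paper leaves implicit.
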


\begin{proof}
By Lemma~\ref{lem:thinning}, choose a \((k-1)\)-cover
\(G_1',\ldots,G_q'\) with \(G_i'\subseteq G_i\) and
\(\omega(G_i')<4q\log N\). The graphs \(H_i=(G_i')^{[d]}\) form a
\((k-1)\)-cover of \(K_{N^d}\). By Lemma~\ref{lem:substitution},
\(\cim(H_i)\leq(\omega(G_i')+2)^{d-1}\cim(G_i')\).
The result follows from \(\omega(G_i')+2\leq8q\log N\) and
\(\cim(G_i')\leq\cim(G_i)\).
\end{proof}

\begin{proof}[Proof of Theorem~\ref{thm:main}]
Fix \(q\geq2\), let \(n\) be sufficiently large and even, and let
\(d=\lfloor\log n/(4q^2\log\log n)\rfloor\).
Starting with the \(q\)-cover of \(K_{d^d}\) assigning all \(q\) colors to
each edge, apply Lemma~\ref{lem:amplification} \(q-1\) times, each time
with exponent \(d\).
The resulting \(1\)-cover is a \(q\)-coloring of \(K_{d^{d^q}}\).

Every application occurs on at most \(d^{d^q}\) vertices, so in
Lemma~\ref{lem:amplification} we have
\(8q\log N\leq8q\log(d^{d^q})=8qd^q\log d\leq d^{q+1}\)
for sufficiently large \(n\). Thus, every color class \(G_i\) satisfies
\[
    \cim(G_i)
    \leq d^d(d^{q+1})^{d(q-1)}
    =d^{q^2d}
    \leq n^{1/4},
\]
since \(q^2d\log d\leq\frac14\log n\). Finally, by~\cite[Lemma~2.2]{CFLS},
almost every perfect matching \(M\) on \([n]\) has \(\cim(M)>n^{1/4}\),
so every \(G_i\) is \(M\)-free.
Since \(d=\Theta_q(\log n/\log\log n)\), we obtain
\[
    r_{<}(M;q)>d^{d^q}
    =2^{d^q\log d}
    =2^{\Omega_q\left(
        (\log n)^q/(\log\log n)^{q-1}
    \right)},
\]
as required.
\end{proof}
\begin{proof}[Proof of Theorem~\ref{thm:chromatic}]
Let \(n\) be a sufficiently large even integer and let
\(q=\lfloor\sqrt{\log n}/4\rfloor\). Starting with the \(q\)-cover
of \(K_2\), apply Lemma~\ref{lem:amplification} \(q-1\) times with
\(d=2\) to get a \(q\)-coloring of \(K_N\), where
\(N=2^{2^{q-1}}\). As before, the color classes \(G_i\) satisfy
\[
    \cim(G_i)
    \leq2(8q\log N)^{2(q-1)}
    \leq2^{4q^2}
    \leq n^{1/4}.
\]
As above, almost every perfect matching \(M\) on \([n]\) satisfies
\(\cim(M)>n^{1/4}\).
Since every \(G_i\) is \(M\)-free and
\(\prod_{i=1}^q\chi(G_i)\geq N\),
\[
    f_{<}(M)\geq N^{1/q}
    =2^{2^{q-1}/q}
    =2^{2^{\Omega(\sqrt{\log n})}}.
\]
\end{proof}
\bibliographystyle{alpha}
\bibliography{references}

@incollection{BalkoSurvey,
  title={A survey on ordered {R}amsey numbers},
  author={Balko, Martin},
  booktitle={Sum(m)it280: Surveys in Extremal Combinatorics and Combinatorial
    Geometry},
  editor={Katona, Gyula O. H. and Patk{\'o}s, Bal{\'a}zs and Tompkins, Casey},
  series={Bolyai Society Mathematical Studies},
  volume={32},
  pages={71--96},
  publisher={Springer},
  address={Cham},
  year={2026},
  doi={10.1007/978-3-032-18810-6_4}
}

@article{BalkoGrinerova,
  title={Estimating multicolor ordered {R}amsey numbers},
  author={Balko, Martin and Grinerov{\'a}, Kl{\'a}ra},
  journal={Discrete Mathematics},
  volume={349},
  number={11},
  pages={115286},
  year={2026},
  doi={10.1016/j.disc.2026.115286}
}

@article{FHW,
  title={{R}amsey numbers of sparse digraphs},
  author={Fox, Jacob and He, Xiaoyu and Wigderson, Yuval},
  journal={Israel Journal of Mathematics},
  volume={263},
  number={1},
  pages={1--48},
  year={2024},
  doi={10.1007/s11856-024-2624-y}
}

@inproceedings{BCTT,
  title={A polynomial-time approximation algorithm for complete interval
    minors},
  author={Bourneuf, Romain and Cocquet, Julien and Tang, Chaoliang and
    Thomass{\'e}, St{\'e}phan},
  booktitle={Approximation, Randomization, and Combinatorial Optimization.
    Algorithms and Techniques ({APPROX/RANDOM} 2025)},
  series={Leibniz International Proceedings in Informatics (LIPIcs)},
  volume={353},
  pages={15:1--15:23},
  publisher={Schloss Dagstuhl -- Leibniz-Zentrum f{\"u}r Informatik},
  year={2025},
  doi={10.4230/LIPIcs.APPROX/RANDOM.2025.15}
}

@article{ARU,
  title={Chromatic number of ordered graphs with forbidden ordered subgraphs},
  author={Axenovich, Maria and Rollin, Jonathan and Ueckerdt, Torsten},
  journal={Combinatorica},
  volume={38},
  number={5},
  pages={1021--1043},
  year={2018},
  doi={10.1007/s00493-017-3593-0}
}

@article{BCKK,
  title={{R}amsey numbers of ordered graphs},
  author={Balko, Martin and Cibulka, Josef and Kr{\'a}l, Karel and
    Kyn{\v{c}}l, Jan},
  journal={Electronic Journal of Combinatorics},
  volume={27},
  pages={P1.16},
  year={2020},
  doi={10.37236/7816}
}

@article{CFLS,
  title={Ordered {R}amsey numbers},
  author={Conlon, David and Fox, Jacob and Lee, Choongbum and Sudakov, Benny},
  journal={Journal of Combinatorial Theory, Series B},
  volume={122},
  pages={353--383},
  year={2017},
  doi={10.1016/j.jctb.2016.06.007}
}
\end{document}